\documentclass[journal]{IEEEtran}

\usepackage[T1]{fontenc}
\usepackage{times}

\usepackage{amsmath,amssymb,amsthm,amsfonts}
\usepackage{graphicx}
\usepackage{booktabs}
\usepackage{algorithm}
\usepackage{algpseudocode}
\usepackage{xcolor}
\usepackage{caption}
\usepackage{subcaption}
\usepackage{cite}
\usepackage{url}
\usepackage[hidelinks]{hyperref}

\usepackage{parskip}
\usepackage{etoolbox}
\AtBeginEnvironment{thebibliography}{\setlength{\parskip}{0pt plus 0pt minus 0pt}}

\newtheorem{theorem}{Theorem}
\newtheorem{proposition}[theorem]{Proposition}
\newtheorem{remark}[theorem]{Remark}

\DeclareMathOperator{\diag}{diag}

\DeclareMathOperator*{\argmin}{arg\,min}

\newcommand{\R}{\mathbb{R}}
\newcommand{\sgm}{\sigma}
\newcommand{\proj}{\mathcal{P}}
\newcommand{\spec}{\mathcal{S}}
\newcommand{\Kset}{\mathcal{K}}
\newcommand{\Cset}{\mathcal{C}}
\newcommand{\Ellipsoid}{\mathcal{E}}

\begin{document}

\title{On feasibility problems with spectral constraints}

\author{Shravan~Mohan\\
17-004, Mantri Residency, Bangalore
\thanks{This is independent research and is not associated with the
author's employer (e-mail: shravan.rammohan@gmail.com).}}

\markboth{Feasibility Problems with Polyhedral Spectral Constraints}%
{Mohan: Alternating Projections for Feasibility Problems with Polyhedral Spectral Constraints}

\maketitle

\begin{abstract}
\textbf{We study matrix feasibility problems ``find
$X \in \mathcal{K} \cap \mathcal{C}$'' where $\mathcal{K}$ is a closed
convex matrix set and $\mathcal{C} = \{X : \sigma(X) \in \mathcal{S}\}$
is defined by a convex constraint $\mathcal{S}$ on the ordered singular
values. Using the classical spectral transfer identity, projection onto
$\mathcal{C}$ reduces to an SVD plus a small quadratic program. For
seven natural polyhedral $\mathcal{S}$ we embed this projector in a
plain alternating projection (AP) loop. We experiment with two concrete
families of $\mathcal{K}$ -- linear constraints (an affine subspace
intersected with an entrywise box) and ellipsoidal constraints (a
non-centered anisotropic Frobenius ellipsoid) -- although the method
applies equally to more general convex constraints. The experiments
expose three regimes: rapid feasibility, slow tail convergence, or
informative infeasibility plateaus.}
\end{abstract}

\begin{IEEEkeywords}
\textbf{Alternating projections, convex feasibility, spectral constraints,
nuclear norm, condition number.}
\end{IEEEkeywords}

\IEEEpeerreviewmaketitle

\section{Introduction}\label{sec:intro}

\IEEEPARstart{M}{any} design and estimation tasks reduce to finding a
matrix that simultaneously meets \emph{structural} requirements on its
entries (or on linear functionals of them) and \emph{spectral}
requirements on its singular values. We formalise this as the abstract
feasibility problem
\begin{equation}\label{eq:feas}
\text{find } X \in \Kset \cap \Cset,
\qquad
\Cset = \{ X \in \R^{m \times n} : \sgm(X) \in \spec \},
\end{equation}
where $\Kset \subset \R^{m \times n}$ is a closed convex set encoding the
structural constraints, and $\Cset$ collects all matrices whose ordered
singular value vector
$\sgm(X) = (\sgm_1, \dots, \sgm_r)$, with $r = \min(m,n)$ and
$\sgm_1 \ge \sgm_2 \ge \cdots \ge \sgm_r \ge 0$, lies in a convex
spectral set $\spec$ defined on the ordered nonnegative cone (polyhedral
in our concrete instances). When a proximity objective
$\min_X \|X - A\|_F$ is added, the same template asks for the
\emph{nearest} structurally- and spectrally-admissible matrix to a
given $A$.

This single template subsumes a number of standard problems, each
obtained by an explicit choice of $\Kset$ and $\spec$.

\begin{itemize}
\item \textbf{Preconditioning to a target condition number.}
Given a design or system matrix $A$, find the nearest $X$ with
$\kappa(X) = \sgm_1(X)/\sgm_r(X) \le \kappa_{\max}$. Here
$\spec = \{ s : s_1 \le \kappa_{\max}\, s_r \}$ is a single linear
inequality on the singular values, while $\Kset$ enforces proximity to
$A$ together with any required entry pattern. The condition number is
exactly the factor by which relative errors can be amplified in
downstream linear solves and least-squares fits, so capping it improves
their numerical accuracy and stability~\cite[Ch.~2--3]{golub2013}
(the standard reference on matrix conditioning and error propagation).

\item \textbf{Weighted nuclear-norm low-rank recovery.}
With linear measurements $\mathcal{A}(X) = b$, take
$\Kset = \{ X : \mathcal{A}(X) = b,\ |X_{ij}| \le \beta \}$ (affine plus
box) and the weighted nuclear-norm constraint
$\spec = \{ s : w^\top s \le \tau \}$, i.e.\
$\sum_{i} w_i\,\sgm_i(X) \le \tau$, for an arbitrary positive weight
vector $w \in \R^r_{>0}$. Unequal weights shrink large and small
singular values differently and outperform the plain nuclear norm
($w = \mathbf{1}$) in low-rank recovery and denoising~\cite{gu2017wnnm}
(which introduces weighted nuclear-norm minimization and shows its
advantage over the plain nuclear norm on image-restoration tasks);
the projection onto $\spec$ remains a convex QP for any $w$.

\item \textbf{Gain shaping for system matrices (control).}
The spectral norm $\sgm_1(X)$ is the worst-case $\ell_2$ gain of the
linear map $x \mapsto Xx$, and $\sgm_r(X)$ its minimum gain.
Singular-value loop shaping confines the gain to a band,
$\spec = \{ s : \ell \le s_i \le u \}$: the upper bound limits
amplification (robustness), while the lower bound $\sgm_1 \ge \ell$
guarantees a minimum gain in every direction, avoiding
near-rank-deficiency and preserving performance (singular-value
loop-shaping for multivariable feedback: the textbook treatment
of~\cite{skogestad2005} and the classical synthesis of~\cite{doyle1981}).
For a discrete-time map $x_{k+1} = Xx_k$, choosing $u < 1$ additionally
certifies a contraction, a sufficient stability condition since
$\rho(X) \le \sgm_1(X)$. With a prescribed sparsity pattern (an affine
$\Kset$) this returns the nearest structurally-admissible matrix with
band-limited gain.

\item \textbf{Spectrum shaping and whitening.}
Bounding the spread $\sgm_1 - \sgm_r \le \delta$ or the successive gaps
$\sgm_i - \sgm_{i+1} \le \delta$ flattens the singular spectrum. The
fully flattened limit is the classical whitening / decorrelation
transform~\cite{kessy2018} (a survey of optimal whitening transforms and
their statistical properties), while a bounded spread yields robust,
well-balanced preconditioners.
\end{itemize}

\textit{A convexity caveat.} In every example above the singular-value
set $\spec$ is convex (a linear inequality, a box, or the simplex on the
ordered nonnegative cone), so the vector projection $\proj_\spec$ -- and
hence the matrix projection $\proj_\Cset$ of Section~\ref{sec:proof} -- is
exact. The induced \emph{matrix} set $\Cset$, however, is convex only for
constraints that upper-bound the largest singular values (the spectral
norm, the nuclear / Ky~Fan norms, and weighted nuclear norms with
non-increasing weights). Constraints that bound the smallest singular
values from below -- the condition number, the gain lower bound, and the
spread / Lipschitz caps -- make $\Cset$ non-convex, because $\sgm_r$ and
the intermediate $\sgm_i$ are not concave functions of $X$. In those
cases the projection identity still holds verbatim and we run AP as a
heuristic. A full derivation of each reduction is given in
Appendix~\ref{app:apps}.

All four are instances of~\eqref{eq:feas}, differing only in the choice
of $\Kset$ and $\spec$. Our contribution is computational. We:
\begin{enumerate}
\item catalogue seven natural polyhedral instances of $\spec$ that cover
the use cases above;
\item implement the exact projector $\proj_\Cset$ as a single SVD plus
one small QP, modeled in the CVXPY convex-optimization
language~\cite{cvxpy} and solved by the OSQP operator-splitting QP
solver~\cite{osqp};
\item drop it into a plain alternating projection loop and run a focused
sweep study on two qualitatively different families of $\Kset$
(polyhedral; ellipsoidal); and
\item report convergence histories and infeasibility witnesses.
\end{enumerate}
Splitting methods such as Douglas--Rachford or ADMM, often used in
the same setting (see~\cite{bauschke} for the underlying projection and
monotone-operator theory), are deliberately omitted here in favour of a
clean exposition of the AP behaviour. The spectral transfer
identity~\eqref{eq:projC} that underpins the projector is
classical~--~it follows from Lewis's convex analysis of unitarily
invariant matrix functions~\cite{lewis1995} and is treated as a proximal
operator in~\cite{beck}; our focus is instead on its behaviour when
embedded in an alternating projection loop across the range of spectral
constraints and convex sets considered here.

\section{Problem Setup}\label{sec:setup}

\subsection{Notation}
For $X \in \R^{m\times n}$ with $r = \min(m,n)$, let
$X = U \diag(\sgm) V^\top$ denote its thin SVD, with the singular
values sorted as
$\sgm_1 \ge \cdots \ge \sgm_r \ge 0$.
We work throughout in the Frobenius inner product
$\langle X, Y\rangle = \mathrm{tr}(X^\top Y)$
with associated norm $\|\cdot\|_F$.
For a closed convex set $\mathcal{S}$, $\proj_{\mathcal{S}}$ denotes
Euclidean projection.

\subsection{Spectral sets and the projection identity}
We say $\spec \subset \R^r$ is \emph{admissible} if it is closed, convex,
contained in the ordered nonnegative cone
$\R^r_{\ge 0,\downarrow} = \{s : s_1 \ge \cdots \ge s_r \ge 0\}$,
and \emph{permutation invariant on the nonnegative orthant} in the sense
that $s \in \spec \iff Ps \in \spec$ for any permutation $P$ acting on
$\R^r_{\ge 0}$ (and projecting back to the ordered cone via sorting).
Let
\begin{equation}\label{eq:Cset}
\Cset \;=\; \{ X \in \R^{m\times n} : \sgm(X) \in \spec \}.
\end{equation}
By construction $\Cset$ is \emph{orthogonally invariant}: if $X \in
\Cset$ then $QXR \in \Cset$ for any orthogonal $Q \in \R^{m\times m}$
and $R \in \R^{n\times n}$.

\subsection{Reducing the matrix projection to a vector projection}
\label{sec:proof}
The Euclidean projection $\proj_\Cset(A)$ is by definition the
minimiser of
\begin{equation}\label{eq:proj_problem}
\min_{X \in \R^{m\times n}} \; \tfrac{1}{2}\|X - A\|_F^2
\quad \text{s.t.}\quad \sgm(X) \in \spec.
\end{equation}
We now reduce \eqref{eq:proj_problem} to a vector problem and exhibit
an optimal $X^\star$ that shares its singular vectors with $A$. The
key tool is the following classical inequality.

\begin{theorem}[von Neumann's trace inequality~\cite{vonNeumann1937}]
\label{thm:vN}
For any $X, A \in \R^{m\times n}$ with singular values
$\sgm(X)$ and $\sgm(A)$ in $\R^r_{\ge 0,\downarrow}$,
\begin{equation}\label{eq:vN}
\langle X, A\rangle \;=\; \mathrm{tr}(X^\top A)
\;\le\; \sum_{i=1}^{r} \sgm_i(X)\,\sgm_i(A).
\end{equation}
Moreover, equality is attained if and only if $X$ and $A$ admit a
\emph{simultaneous SVD}, i.e.\ there exist orthogonal matrices
$U \in \R^{m\times m}$, $V \in \R^{n\times n}$ such that
\begin{equation}\label{eq:sameUV}
X = U \diag(\sgm(X))\,V^\top, \qquad
A = U \diag(\sgm(A))\,V^\top.
\end{equation}
\end{theorem}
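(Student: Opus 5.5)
We prove the inequality by reducing it to a statement about doubly substochastic matrices, and then characterise the equality case through that reduction. First, take full SVDs $X = U_X \Sigma_X V_X^\top$ and $A = U_A \Sigma_A V_A^\top$, where $\Sigma_X = \diag(\sgm(X))$ and $\Sigma_A = \diag(\sgm(A))$ are padded to $m\times n$. Set $P = U_X^\top U_A$ and $Q = V_X^\top V_A$; both are orthogonal. Then
\[
\langle X, A\rangle = \mathrm{tr}(V_X \Sigma_X^\top U_X^\top U_A \Sigma_A V_A^\top) = \mathrm{tr}(\Sigma_X^\top P \Sigma_A Q^\top) = \sum_{i,j=1}^{r} \sgm_i(X)\,\sgm_j(A)\, P_{ij} Q_{ij}.
\]
Define $W_{ij} = |P_{ij} Q_{ij}|$ for $i,j \le r$. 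By Cauchy--Schwarz on the rows and columns of the orthogonal matrices $P$ and $Q$, every row and column sum of $W$ is at most $1$, so $W$ is doubly substochastic. The quantity is therefore bounded by $\sum_{i,j} \sgm_i(X)\sgm_j(A) W_{ij}$, which is linear in $W$. By the standard fact that every doubly substochastic matrix is dominated entrywise by a doubly stochastic matrix, and by Birkhoff's theorem, the maximum over doubly substochastic $W$ is attained at a permutation matrix. The nonnegative weights here justify the domination step.

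Next, the rearrangement inequality shows that, for two nonincreasing nonnegative vectors, the identity permutation maximises $\sum_i \sgm_i(X)\sgm_{\pi(i)}(A)$. This yields \eqref{eq:vN}. An alternative I keep in reserve is Abel summation. Write $\Sigma_X = \sum_k (\sgm_k - \sgm_{k+1}) E_k$, where $E_k$ is the diagonal projector onto the first $k$ coordinates. This reduces the claim to the bound $\mathrm{tr}(E_k P \Sigma_A Q^\top) \le \sum_{i\le k}\sgm_i(A)$, which is Ky Fan's maximum principle. The Abel route is cleaner for the equality analysis.

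For the equality case, the ``if'' direction is immediate: with a common $U, V$ we get $\langle X,A\rangle = \mathrm{tr}(\Sigma_X^\top\Sigma_A) = \sum_i \sgm_i(X)\sgm_i(A)$. The ``only if'' direction is the main obstacle, because singular values may be repeated or zero, and then the singular vectors are not unique. Here I would use the Abel decomposition. Equality forces equality in each Ky Fan step $k$ for which $\sgm_k(X) > \sgm_{k+1}(X)$ (and also for $k=r$ when $\sgm_r(X)>0$). Equality in Ky Fan for the block $E_k$ implies that the subspace spanned by the top-$k$ left singular vectors of $X$, paired with the right ones, is an invariant pair for $A$ on which $A$ acts with its top $k$ singular values.

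Applying this to every gap of $\sgm(X)$ splits $\R^m$ and $\R^n$ into mutually orthogonal blocks, one block per distinct singular value of $X$. On each such block, $X$ acts as a scalar multiple of a partial isometry $U_b V_b^\top$. The restriction of $A$ to the block is $U_b M_b V_b^\top$ with $M_b$ a square matrix, and the block's share of the trace satisfies $\mathrm{tr}(M_b) = \sum_i \sgm_i(M_b)$. Since $\mathrm{tr}(M) \le \sum_i \sgm_i(M)$ with equality iff $M$ is positive semidefinite, $M_b$ must be positive semidefinite. Diagonalising $M_b = W_b D_b W_b^\top$ and replacing $U_b \mapsto U_b W_b$, $V_b \mapsto V_b W_b$ leaves $X$ unchanged on the block and diagonalises $A$ there.

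On the kernel block of $X$ (the zero singular values) we take an arbitrary SVD of the remaining part of $A$. Finally, we order the blocks consistently so that the values $\sgm_i(A)$ fall in sorted order; the tight Ky Fan equalities guarantee exactly this ordering. Together these give the orthogonal $U$ and $V$ of \eqref{eq:sameUV}. The bookkeeping in this final assembly, especially the positive semidefinite argument on each block and the treatment of the zero block, is where I would spend the most care.
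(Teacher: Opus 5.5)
The paper gives no proof of this theorem. It is cited to \cite{vonNeumann1937} and used as a black box. Theorem~\ref{thm:proj} in fact needs only the inequality \eqref{eq:vN} and the trivial ``if'' half of the equality clause; the ``only if'' half is used only for the necessity claim in the remark that follows. Your argument is therefore a different, self-contained route, and it is correct. The Birkhoff/rearrangement bound through the doubly substochastic matrix $W_{ij}=|P_{ij}Q_{ij}|$ (Mirsky's argument) is complete as written. Your reserve route is not circular: Ky Fan's principle $\sum_{i\le k}u_i^\top A v_i\le\sum_{i\le k}\sgm_i(A)$ follows from the inequality you just proved, applied to $\sum_{i\le k}u_iv_i^\top$, whose singular values are $k$ ones. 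The Abel decomposition is the right device for the equality case, because it isolates exactly the indices where the singular subspaces of $X$ are determined.

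The one step you assert without argument is the real content of the ``only if'' direction, more than the PSD bookkeeping you flag. That step is the claim that a tight Ky Fan sum makes the top-$k$ singular subspaces of $X$ a reducing pair for $A$. It closes quickly. Let $U_k=[u_1,\dots,u_k]$, $V_k=[v_1,\dots,v_k]$ and $M=U_k^\top AV_k$. Then $\mathrm{tr}\,M\le\sum_{i\le k}\sgm_i(M)\le\sum_{i\le k}\sgm_i(A)$, since $\sgm_i(M)\le\sgm_i(A)$ for a compression. Equality therefore gives $\sgm_i(M)=\sgm_i(A)$ for all $i\le k$. Next, $\|AV_k\|_F^2=\mathrm{tr}(V_k^\top A^\top AV_k)\le\sum_{i\le k}\sgm_i(A)^2=\|U_k^\top AV_k\|_F^2$, which forces $(I-U_kU_k^\top)AV_k=0$; symmetrically $(I-V_kV_k^\top)A^\top U_k=0$. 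Apply this at the nested gap indices $k_1<\dots<k_p$. Then $A$ is block diagonal in your blocks, including the kernel block. Each block trace $\mathrm{tr}(M_b)$ is a difference of consecutive tight Ky Fan sums, and the singular values of $M_b$ are the corresponding consecutive segment of $\sgm(A)$. This is exactly what your PSD step and your final ordering of the blocks require.
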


\begin{theorem}[Spectral projection identity~\cite{lewis1995,beck}]
\label{thm:proj}
Let $\spec$ be admissible and let $A \in \R^{m\times n}$ have thin SVD
$A = U \diag(s) V^\top$ with $s = \sgm(A) \in \R^r_{\ge 0,\downarrow}$.
Then problem \eqref{eq:proj_problem} attains its minimum at
\begin{equation}\label{eq:projC}
X^\star \;=\; U\,\diag\!\bigl(\proj_\spec(s)\bigr)\,V^\top,
\end{equation}
where $\proj_\spec$ denotes Euclidean projection on $\R^r$. Equivalently,
$\proj_\Cset(A)$ shares the singular vectors $(U, V)$ of $A$, and its
singular values are obtained by projecting $\sgm(A)$ onto $\spec$.
\end{theorem}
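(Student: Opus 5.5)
The plan is to sandwich the optimal value of \eqref{eq:proj_problem} between a lower bound, obtained from von Neumann's inequality (Theorem~\ref{thm:vN}), and the value of a feasible candidate built from the vector projection. First expand the objective in the Frobenius inner product:
\begin{equation*}
\tfrac12\|X-A\|_F^2 = \tfrac12\|\sgm(X)\|_2^2 - \langle X, A\rangle + \tfrac12\|s\|_2^2 ,
\end{equation*}
using $\|X\|_F^2 = \sum_i \sgm_i(X)^2$ and $\|A\|_F^2=\|s\|_2^2$. By \eqref{eq:vN}, $\langle X,A\rangle \le \sgm(X)^\top s$, so for every feasible $X$,
\begin{equation*}
\tfrac12\|X-A\|_F^2 \;\ge\; \tfrac12\|\sgm(X)\|_2^2 - \sgm(X)^\top s + \tfrac12\|s\|_2^2 \;=\; \tfrac12\|\sgm(X)-s\|_2^2 .
\end{equation*}
Since $\sgm(X)\in\spec$, the right-hand side is at least $\tfrac12\|\proj_\spec(s)-s\|_2^2$. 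The projection exists and is unique because $\spec$ is closed, convex and nonempty; nonemptiness is implicit in the feasibility of \eqref{eq:proj_problem}.

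Second, show that $X^\star$ attains this bound. Set $p=\proj_\spec(s)$. Because $\spec\subset\R^r_{\ge0,\downarrow}$, the vector $p$ is nonnegative and nonincreasing. Hence $U\diag(p)V^\top$ is a genuine (thin) SVD of $X^\star$, and $\sgm(X^\star)=p\in\spec$, so $X^\star$ is feasible. The singular vectors of $X^\star$ and $A$ coincide, and orthonormality of the columns of $U$ and $V$ gives
\begin{equation*}
\|X^\star-A\|_F=\|U\diag(p-s)V^\top\|_F=\|p-s\|_2 .
\end{equation*}
Thus $X^\star$ achieves the lower bound and is a minimiser. The ``equivalently'' clause then just restates \eqref{eq:projC}.

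The main subtlety I expect is the bookkeeping of orderings, which is where admissibility enters. Von Neumann pairs $\sgm_i(X)$ with $\sgm_i(A)$ in sorted order, so the lower bound is a function of the sorted vector $\sgm(X)$. Because $\spec$ lies in the ordered cone, minimising $\|t-s\|$ over $t\in\spec$ is the right comparison problem. If $\spec$ were not contained in the ordered cone, one would need the rearrangement fact that the sorted $s$ is closest to sorted vectors, together with permutation invariance, to conclude that $\proj_\spec(s)$ is already sorted. Here containment makes this automatic, so I would simply remark on it.

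A secondary point is non-uniqueness of the SVD of $A$ when $A$ has repeated or zero singular values. The argument never uses uniqueness: any choice of $(U,V)$ gives a minimiser with the same objective value. So the theorem asserts attainment rather than uniqueness. If uniqueness of $\proj_\Cset(A)$ is desired in the convex cases, it would follow from strict convexity of the objective, but that is not required by the statement.
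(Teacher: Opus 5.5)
Your proposal is correct and follows the paper's proof: the von Neumann lower bound reduces the problem to $\min_{t\in\spec}\tfrac12\|t-s\|_2^2$, and the candidate $X^\star = U\diag(\proj_\spec(s))V^\top$ attains that bound. The only difference is cosmetic. You verify attainment by the direct computation $\|U\diag(p-s)V^\top\|_F=\|p-s\|_2$ instead of invoking the equality case of the trace inequality, which is slightly cleaner because it works with the thin-SVD factors and does not need the ``if and only if'' part of Theorem~\ref{thm:vN}.
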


\begin{proof}
Expanding the objective of \eqref{eq:proj_problem} gives
\begin{equation}\label{eq:expand}
\tfrac{1}{2}\|X - A\|_F^2 \;=\; \tfrac{1}{2}\|X\|_F^2 + \tfrac{1}{2}\|A\|_F^2 - \langle X, A\rangle.
\end{equation}
The Frobenius norm is unitarily invariant, so
$\|X\|_F^2 = \sum_i \sgm_i(X)^2$ and similarly for $A$. By the
trace inequality \eqref{eq:vN},
\begin{equation}\label{eq:lb}
\langle X, A\rangle \;\le\; \sgm(X)^\top \sgm(A),
\end{equation}
and combining \eqref{eq:expand} and \eqref{eq:lb},
\begin{align}
\tfrac{1}{2}\|X - A\|_F^2
&\;\ge\;
\tfrac{1}{2}\|\sgm(X)\|_2^2 + \tfrac{1}{2}\|\sgm(A)\|_2^2 - \sgm(X)^\top \sgm(A) \notag \\
&\;=\;
\tfrac{1}{2}\|\sgm(X) - \sgm(A)\|_2^2.
\label{eq:lb2}
\end{align}
For any feasible $X \in \Cset$, the right-hand side of \eqref{eq:lb2}
depends only on $\sgm(X) \in \spec$. Hence the optimal value of
\eqref{eq:proj_problem} is at least
\begin{equation}\label{eq:vec_lower}
\min_{t \in \spec}\;\tfrac{1}{2}\|t - \sgm(A)\|_2^2
\;=\;\tfrac{1}{2}\|\proj_\spec(s) - s\|_2^2.
\end{equation}

We now show that this lower bound is attained by the choice
\eqref{eq:projC}. Let $t^\star = \proj_\spec(s) \in \spec$, and set
$X^\star = U \diag(t^\star) V^\top$ with the same orthogonal matrices
$U, V$ as in the SVD of $A$. Then $\sgm(X^\star) = t^\star \in \spec$,
so $X^\star \in \Cset$. Moreover $X^\star$ and $A$ trivially admit a
simultaneous SVD \eqref{eq:sameUV}, so equality holds in
\eqref{eq:vN}, hence in \eqref{eq:lb}, hence in \eqref{eq:lb2}. The
objective at $X^\star$ therefore equals
$\tfrac{1}{2}\|t^\star - s\|_2^2$, which matches \eqref{eq:vec_lower}.
\end{proof}

\begin{remark}[Why $U$ and $V$ are inherited from $A$]
The proof shows that aligning the singular vectors of $X$ with those of
$A$ is both \emph{necessary} (to realise equality in von Neumann's
inequality) and \emph{sufficient} (combined with any $t \in \spec$) to
attain the lower bound \eqref{eq:vec_lower}. In particular, any other
choice of orthogonal matrices yields a strictly suboptimal $X$.
\end{remark}

\begin{remark}[The vector subproblem]
Theorem~\ref{thm:proj} reduces a projection in $\R^{mn}$ to one in
$\R^r$. For all seven instances of $\spec$ in
Section~\ref{sec:polyhedral}, the vector subproblem is a strictly
convex QP on $r$ variables, solved here through CVXPY/OSQP. The total
cost of $\proj_\Cset(A)$ is therefore one thin SVD ($O(mn\,\min(m,n))$)
plus one small QP. \newline 
\end{remark}

\paragraph*{Method summary}
Given $A \in \R^{m\times n}$, computing $\proj_\Cset(A)$ proceeds as
follows.
\begin{enumerate}
\item Compute the thin SVD $A = U\,\diag(s)\,V^\top$ with $s$ sorted in
      descending order.
\item Solve the vector projection
      $t^\star = \argmin_{t \in \spec}\|t - s\|_2^2$.
\item Return $X^\star = U\,\diag(t^\star)\,V^\top$.
\end{enumerate}
By Theorem~\ref{thm:proj}, $X^\star$ is the unique\footnote{Uniqueness
holds whenever the singular values of $A$ are distinct; in degenerate
cases the projection is unique only in the ``spectral'' sense, i.e.\
$\sgm(X^\star)$ is unique.} minimiser of \eqref{eq:proj_problem}.

\section{Polyhedral Spectral Constraints}\label{sec:polyhedral}

We focus on the case where $\spec$ is the intersection of the ordered
nonnegative cone with a polyhedron $\{s : Ls \le b\}$; computing
$\proj_\spec$ is then a strictly convex QP in $r$ variables, solved here
through CVXPY using the OSQP solver~\cite{osqp}. Throughout, we impose
the ordering and nonnegativity constraints $s_1 \ge \cdots \ge s_r \ge 0$,
and the seven cases below differ only in the additional inequalities
$Ls \le b$.

The restriction to polyhedral $\spec$ is purely for concreteness. The
spectral projection identity of Theorem~\ref{thm:proj} uses nothing
about $\spec$ beyond the existence of its projector $\proj_\spec$, so
$\spec$ may be any closed convex set on the ordered singular values --- a
norm ball, a quadratic or second-order-cone region, or any other convex
constraint --- and the method is unchanged: one SVD followed by the
(now possibly conic) vector projection. The polyhedral instances merely
make $\proj_\spec$ a quadratic program.

\begin{enumerate}
\item \textbf{Condition number.}
$\sgm_1 \le \kappa \, \sgm_r$, equivalent to a single linear inequality
$\sgm_1 - \kappa \sgm_r \le 0$.

\item \textbf{Spectral spread.}
$\sgm_1 - \sgm_r \le \delta$, again a single linear constraint.

\item \textbf{Singular value box.}
$\ell_i \le \sgm_i \le u_i$ for $i=1,\dots,r$. With $\ell = 0$ this
reduces to clipping; in general the ordering constraint makes it a
non-trivial QP.

\item \textbf{Weighted cap.}
$w^\top \sgm \le c$ for a fixed positive weight vector
$w \in \R^r_{>0}$. With $w = \mathbf{1}$ this is the nuclear-norm
constraint $\|X\|_* \le c$.

\item \textbf{Arbitrary polyhedron.}
$L \sgm \le b$ with $L \in \R^{p\times r}$.

\item \textbf{Lipschitz spectrum.}
$|\sgm_i - \sgm_{i+1}| \le \delta$ for $i=1,\dots,r-1$. Combined with
the ordering constraint this collapses to
$\sgm_i - \sgm_{i+1} \le \delta$.

\item \textbf{Spectral simplex.}
$\sum_i \sgm_i = z,\ \sgm_i \ge 0$. Admits a $O(r\log r)$ closed-form
projection~\cite{simplexproj} (a sort-and-threshold algorithm for
Euclidean projection onto the probability simplex).
\end{enumerate}

\section{Algorithm: Alternating Projections}\label{sec:ap}

Recasting the feasibility problem~\eqref{eq:feas} as a pair of
alternating projections is precisely what makes the approach general:
the two sets $\Kset$ and $\Cset$ enter only through their projectors, so
any pair of sets equipped with a projection oracle can be combined
without altering the algorithm. All of the spectral structure is
confined to the single operator $\proj_\Cset$, and all of the structural
structure to $\proj_\Kset$.

Given $\proj_\Kset$ and $\proj_\Cset$, the von Neumann / Bregman
alternating projection (AP) iteration is
\begin{equation}\label{eq:AP}
X_k = \proj_\Cset(A_k),
\qquad
A_{k+1} = \proj_\Kset(X_k),
\end{equation}
starting from any $A_0$, typically $A_0 = A_{\mathrm{target}}$ for a
nearest-point flavoured problem. We use the early stop rule
\begin{equation}\label{eq:stop}
\text{stop at iter } k \iff \mathrm{metric}(A_k) \le \mathrm{target}\,(1 + \epsilon),
\end{equation}
with $\epsilon = 10^{-6}$, where $\mathrm{metric}$ is the scalar
spectral functional being constrained (e.g.\ $\kappa$, spread,
weighted sum). We also cap the iteration count at $100$.

\begin{algorithm}[t]
\caption{Alternating projection for $\Kset \cap \Cset$.}\label{alg:AP}
\begin{algorithmic}[1]
\Require $A_0 \in \R^{m\times n}$, projectors
$\proj_\Kset$, $\proj_\Cset$, max-iter $T$, tolerance $\epsilon$,
metric $\phi$ and target $\tau$.
\For{$k = 0, 1, \dots, T-1$}
  \State $X_k \gets \proj_\Cset(A_k)$
  \State $A_{k+1} \gets \proj_\Kset(X_k)$
  \If{$\phi(A_{k+1}) \le \tau(1+\epsilon)$}
    \State \Return $A_{k+1}$ \Comment{feasible early stop}
  \EndIf
\EndFor
\State \Return $A_T$ \Comment{may be infeasible}
\end{algorithmic}
\end{algorithm}

\begin{proposition}[Monotone gap]\label{prop:gap}
For any closed convex sets $\Kset, \Cset$ and any starting $A_0$,
\[
\|A_{k+1} - X_{k+1}\| \;\le\; \|A_{k+1} - X_k\| \;\le\; \|A_k - X_k\|,
\]
so the gap $\|A_k - X_k\|$ is non-increasing.
\end{proposition}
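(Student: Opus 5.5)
The proof only needs the variational characterization of a Euclidean projection: for a nonempty closed set $\mathcal{D}$ and any point $Y$, $\proj_{\mathcal{D}}(Y)$ is a nearest point of $\mathcal{D}$ to $Y$. Hence $\|Y - \proj_{\mathcal{D}}(Y)\|_F \le \|Y - Z\|_F$ for every $Z \in \mathcal{D}$. I will apply this twice, once to each step of the iteration \eqref{eq:AP}, and then chain the two inequalities.

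\textbf{First inequality.} Recall $X_{k+1} = \proj_\Cset(A_{k+1})$, and $X_k \in \Cset$ because it is itself the output of $\proj_\Cset$. Taking $Y = A_{k+1}$, $\mathcal{D} = \Cset$ and the competitor $Z = X_k$ gives
$\|A_{k+1} - X_{k+1}\|_F \le \|A_{k+1} - X_k\|_F$.

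\textbf{Second inequality.} Recall $A_{k+1} = \proj_\Kset(X_k)$, and $A_k \in \Kset$ for $k \ge 1$ because it is the output of $\proj_\Kset$. Taking $Y = X_k$, $\mathcal{D} = \Kset$ and $Z = A_k$ gives
$\|X_k - A_{k+1}\|_F \le \|X_k - A_k\|_F$.
Chaining the two inequalities yields the displayed statement. Induction on $k$ then shows that the sequence $\|A_k - X_k\|_F$ is non-increasing.

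\textbf{Two subtleties to handle explicitly.}

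\emph{The base case $k=0$.} The starting point $A_0$ is arbitrary and need not lie in $\Kset$, so the second inequality can fail at $k=0$. I will therefore state monotonicity for $k \ge 1$, or equivalently assume $A_0 \in \Kset$, for instance by replacing $A_0$ with $\proj_\Kset(A_0)$. The first inequality holds for every $k \ge 0$.

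\emph{Convexity of $\Cset$.} The paper notes that $\Cset$ may be non-convex, for example under the condition-number constraint. The argument above never uses convexity. It needs only that $\Cset$ is closed, that a nearest point exists (so $\proj_\Cset$ is a selection of nearest points), and that the iterates use those nearest points. Theorem~\ref{thm:proj} guarantees this, since the minimum of \eqref{eq:proj_problem} is attained. So the proposition also holds in the heuristic non-convex regime, and I will say so.

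I expect neither step to pose a real obstacle. The only care needed is the bookkeeping over which iterate lies in which set at each index, and the base case above.
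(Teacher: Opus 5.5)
Your proof is correct and matches the paper's: the nearest-point property is applied once with competitor $X_k \in \Cset$, once with competitor $A_k \in \Kset$, and the two inequalities are chained. Both of your caveats are sound refinements. The paper's proof also silently needs $k \ge 1$, since it notes $A_k \in \Kset$ only for $k \ge 1$; indeed, with $\Kset = \{0\}$, $\Cset = \R$ and $A_0 = 1$, the second inequality fails at $k=0$. Convexity of $\Cset$ is never used, which matters because the paper runs AP with non-convex $\Cset$.
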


\begin{proof}
By construction $X_k = \proj_\Cset(A_k) \in \Cset$ and
$A_{k+1} = \proj_\Kset(X_k) \in \Kset$; in particular $A_k \in \Kset$ for
every $k \ge 1$. The Euclidean projection onto a closed convex set
returns the nearest point of that set, hence it cannot be farther from
the projected point than any other member of the set. Applying this to
$X_{k+1} = \proj_\Cset(A_{k+1})$ with the competitor $X_k \in \Cset$,
\begin{equation}\label{eq:gap1}
\|A_{k+1} - X_{k+1}\| \;\le\; \|A_{k+1} - X_k\|,
\end{equation}
and applying it to $A_{k+1} = \proj_\Kset(X_k)$ with the competitor
$A_k \in \Kset$,
\begin{equation}\label{eq:gap2}
\|A_{k+1} - X_k\| \;=\; \|X_k - A_{k+1}\| \;\le\; \|X_k - A_k\|.
\end{equation}
Chaining \eqref{eq:gap1} and \eqref{eq:gap2} yields the stated
inequalities, so the gap $\|A_k - X_k\|$ is non-increasing.
\end{proof}

We caution that this does \emph{not}
imply convergence to a feasible point when $\Kset \cap \Cset = \emptyset$,
nor (in general) when the intersection is nonempty: convergence is
established under additional regularity (e.g.\ transversality,
Fej\'er monotonicity arguments). We therefore treat AP as a heuristic.

Even when the iteration does not reach a strictly feasible point, the
returned iterate remains a useful surrogate. By
Proposition~\ref{prop:gap}, the gap $\|A_k - X_k\|$ decreases
monotonically, so the final pair $(A_k, X_k)$ is the most mutually
consistent one the iteration produced: $A_k$ exactly satisfies the
structural constraints ($A_k \in \Kset$), $X_k$ exactly satisfies the
spectral constraints ($X_k \in \Cset$), and the two differ by the
smallest residual attained. In the infeasible regime this residual
quantifies how far the two requirements can be reconciled, and either
iterate can be reported as a near-feasible solution that violates only
one set, and only by $\|A_k - X_k\|$. The infeasibility plateaus in
Section~\ref{sec:results} are exactly such surrogates, and their plateau
values carry quantitative meaning (Observation~(O3)).

\section{Experimental Setup}\label{sec:setup-exp}

All experiments use a fixed problem of dimensions $m=20$, $n=15$,
$r=15$ matrices, with a planted feasible interior point
$X_\mathrm{feas}$ and an $A_\mathrm{target} = X_\mathrm{feas} +
0.5 \xi$ for $\xi$ Gaussian. For each of the seven spectral
constraints we sweep the parameter that defines $\spec$ (e.g.\
$\kappa$, $\delta$, $c$, $u$, $\tau$, or $z$) over an ordered schedule
that tightens the constraint -- deliberately extending it past the
point of feasibility -- and re-run Algorithm~\ref{alg:AP} from
$A_0 = A_\mathrm{target}$ for each convex set $\Kset$.

\subsection{Choice of $\Kset$}
We compare two qualitatively different convex matrix sets:

\paragraph{Affine + box}
$\Kset = \Kset_{\mathrm{aff}} =
\{ X : A_\mathrm{lin} X = B_\mathrm{lin},\ -1 \le X_{ij} \le 1 \}$,
with $A_\mathrm{lin} \in \R^{5\times m}$ Gaussian and $B_\mathrm{lin}$
chosen so that $X_\mathrm{feas} \in \Kset$.

\paragraph{Ellipsoid}
$\Kset = \Ellipsoid =
\bigl\{ X : \sum_{ij} w_{ij}^2 (X_{ij} - X_c^{ij})^2 \le r^2 \bigr\}$,
with random anisotropic per-entry weights $w_{ij} \in [0.5,2]$, a
random non-zero center $X_c$ (in our seed,
$\|X_c\|_F \approx 5.17$), and radius chosen with a 5\% slack around
$X_\mathrm{feas}$.

\subsection{Constraint metrics}
For each of the seven spectral cases the early-stop metric $\phi(A)$ is
the scalar quantity bounded by $\spec$:
\[
\begin{array}{ll}
\kappa & \phi(A) = \sgm_1(A)/\sgm_r(A) \\
\text{spread} & \phi(A) = \sgm_1(A) - \sgm_r(A) \\
\text{box} & \phi(A) = \max(\sgm_1(A)-u,\ l-\sgm_r(A),\ 0) \\
\text{weighted} & \phi(A) = w^\top \sgm(A) \\
\text{polyhedral} & \phi(A) = \max_j (L\sgm(A))_j / b^0_j \\
\text{Lipschitz} & \phi(A) = \max_i (\sgm_i(A) - \sgm_{i+1}(A)) \\
\text{simplex} & \phi(A) = \textstyle\sum_i \sgm_i(A)
\end{array}
\]
Feasibility is declared when $\phi(A_k) \le \tau$ for the inequality
constraints, or $|\phi(A_k) - z| \le 10^{-2}z$ for the simplex equality.
The box case is genuinely two-sided: we fix a positive lower bound
$l=0.5$ on every singular value and tighten the upper bound $u$; the
metric $\phi$ is then the box violation (zero iff $l \le \sgm_i \le u$
for all $i$, so $\tau=0$), capturing both bounds at once.
For the polyhedral sweep we fix a random nonnegative $L$, set the
baseline $b^0 = L\,\sgm(A_\mathrm{target})$, and tighten the right-hand
side as $b = \tau b^0$; the metric is then the worst normalised row.
Targets are integer where naturally so, and the weighted cap $c$ is
rounded to the nearest integer.

\section{Results}\label{sec:results}

\subsection{Affine + box $\Kset$ vs ellipsoidal $\Ellipsoid$}

We run all seven spectral constraints on both convex sets, sweeping each
spectral parameter from loose to tight, deliberately into the infeasible
regime. Figure~\ref{fig:affine} collects the seven sweeps for the affine
+ box $\Kset$ and Fig.~\ref{fig:ellip} the seven for the ellipsoidal
$\Ellipsoid$. In every panel feasible runs are drawn as solid lines
(annotated with the early-stop iteration) and infeasible runs as dashed
lines whose legend reports the plateau value reached and its gap to the
target -- i.e.\ exactly how close AP got to the constraint.

For the affine + box $\Kset$ (Fig.~\ref{fig:affine}), the
condition-number, spread, and Lipschitz sweeps reach every target within
$100$ iterations, with the iteration count growing as the constraint
tightens. The remaining four cases expose informative
\emph{infeasibility plateaus}. In the weighted-cap sweep (arbitrary
positive weights $w_i$ printed below the panel), integer caps down to
$c=14$ are met, while $c=8$ plateaus at $w^\top\sgm \approx 12.3$ --- an
achievable value in $\Kset$ that estimates, and upper-bounds, the best
attainable $\min_{X\in\Kset} w^\top\sgm(X)$. The two-sided
box sweep (fixed lower bound $l=0.5$, tightening $u$) is feasible for
$u \ge 2$, but for every $u \le 1.5$ the violation plateaus at a residual
of $\sgm_1 - u \approx 0.12$ at $u=1.5$; AP cannot push $\sgm_1$
below $\approx 1.62$ in this $\Kset$ (the lower bound $l$ is met
throughout). The
polyhedral sweep becomes infeasible only at the tightest scale
($\tau=0.2$, reaching $0.25$), and the simplex equality becomes
unreachable at the smallest target sum ($z\approx 6.2$, plateauing at
$\approx 6.3$).

For the ellipsoidal $\Ellipsoid$ (Fig.~\ref{fig:ellip}), AP converges
far faster -- often in a single iteration -- and \emph{every} target,
across all seven constraints and all parameter values, is met. We
attribute this to the smooth, full-dimensional geometry of
$\Ellipsoid$: there are no flat faces or corners for the iterates to
wedge into, and the ellipsoid intersects essentially any reasonable
spectral set $\Cset$. The polyhedral affine $\Kset$, by contrast, can
force the iterate into a low-dimensional slice where a tightened $\Cset$
is unreachable -- and where the plateau then quantifies the residual
infeasibility.

\begin{figure*}[!t]
\centering
\begin{subfigure}[t]{0.32\textwidth}
  \includegraphics[width=\linewidth]{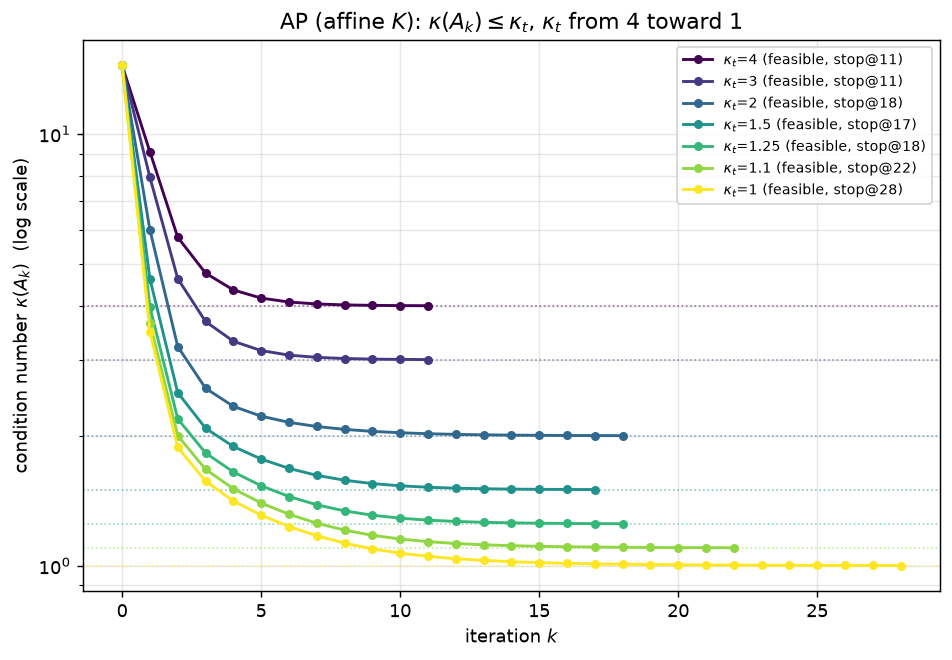}
  \caption{condition number $\kappa$}
\end{subfigure}\hfill
\begin{subfigure}[t]{0.32\textwidth}
  \includegraphics[width=\linewidth]{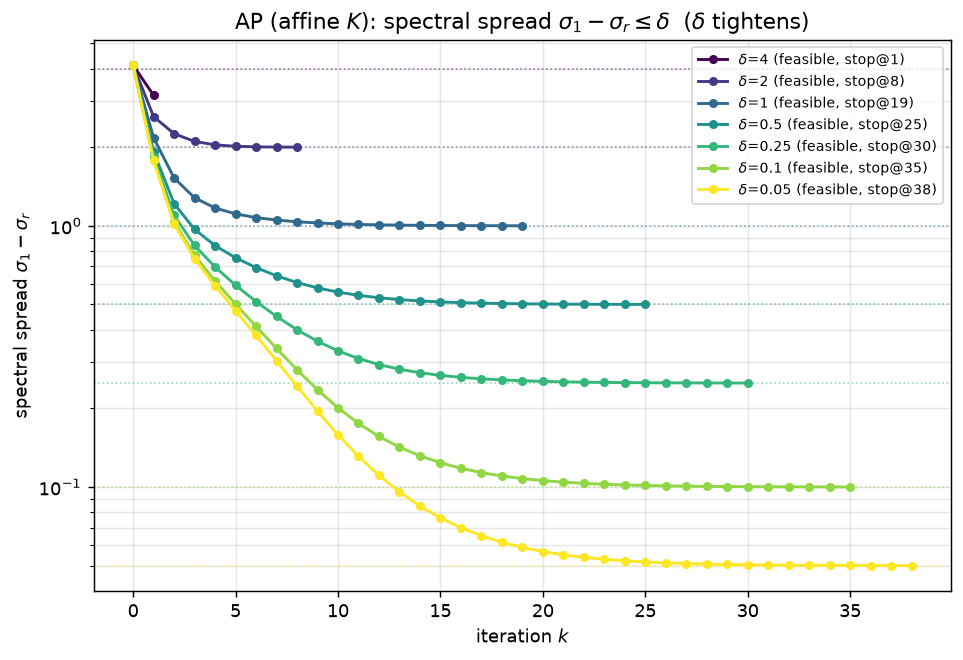}
  \caption{spectral spread}
\end{subfigure}\hfill
\begin{subfigure}[t]{0.32\textwidth}
  \includegraphics[width=\linewidth]{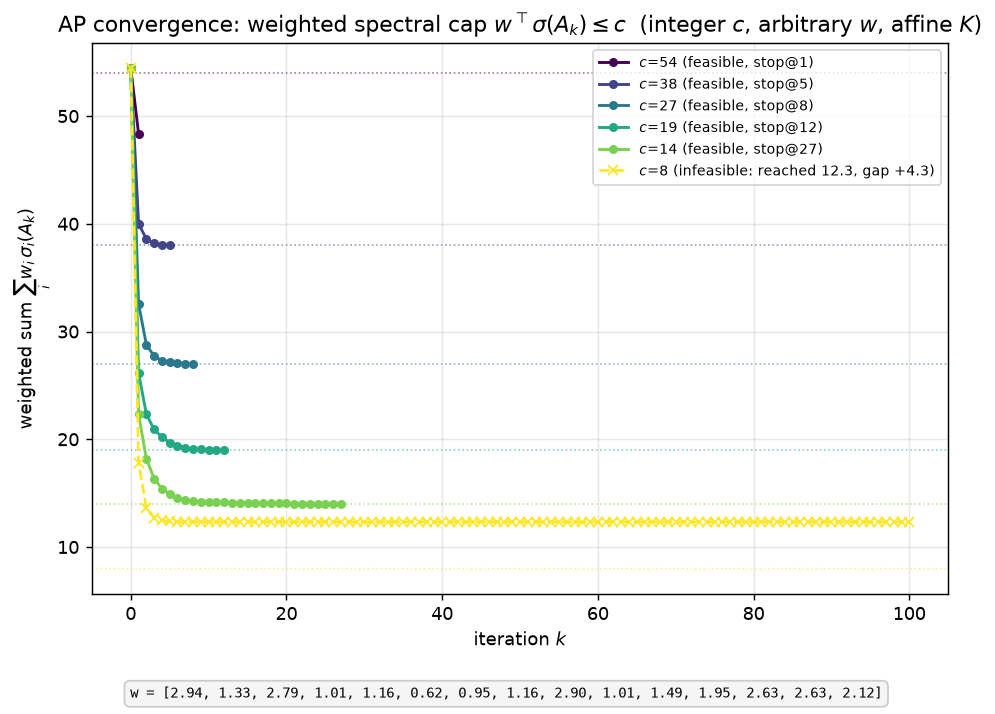}
  \caption{weighted cap}
\end{subfigure}

\vspace{0.5em}

\begin{subfigure}[t]{0.32\textwidth}
  \includegraphics[width=\linewidth]{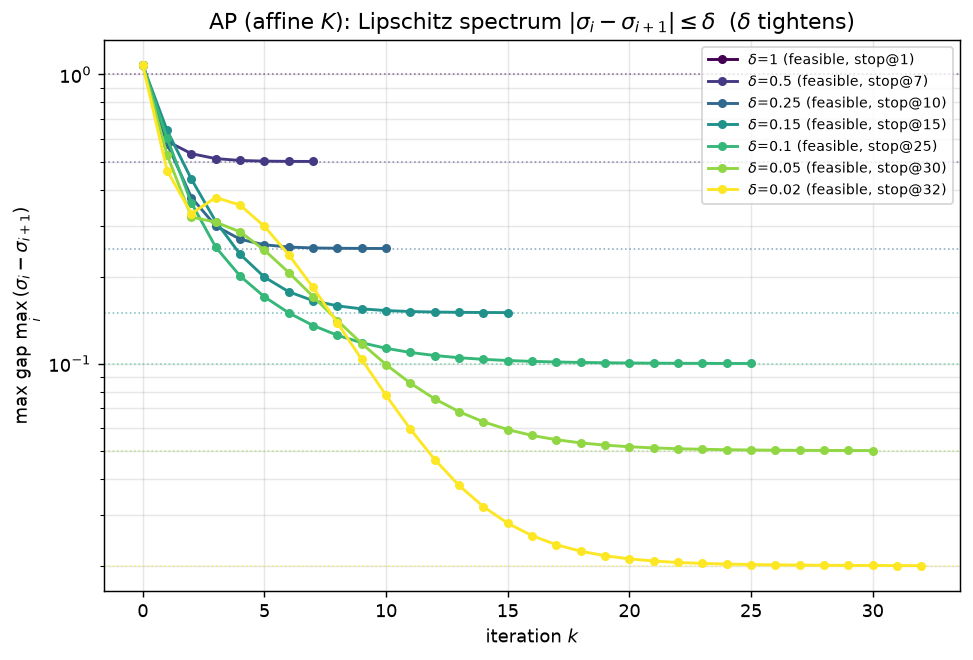}
  \caption{Lipschitz spectrum}
\end{subfigure}\hfill
\begin{subfigure}[t]{0.32\textwidth}
  \includegraphics[width=\linewidth]{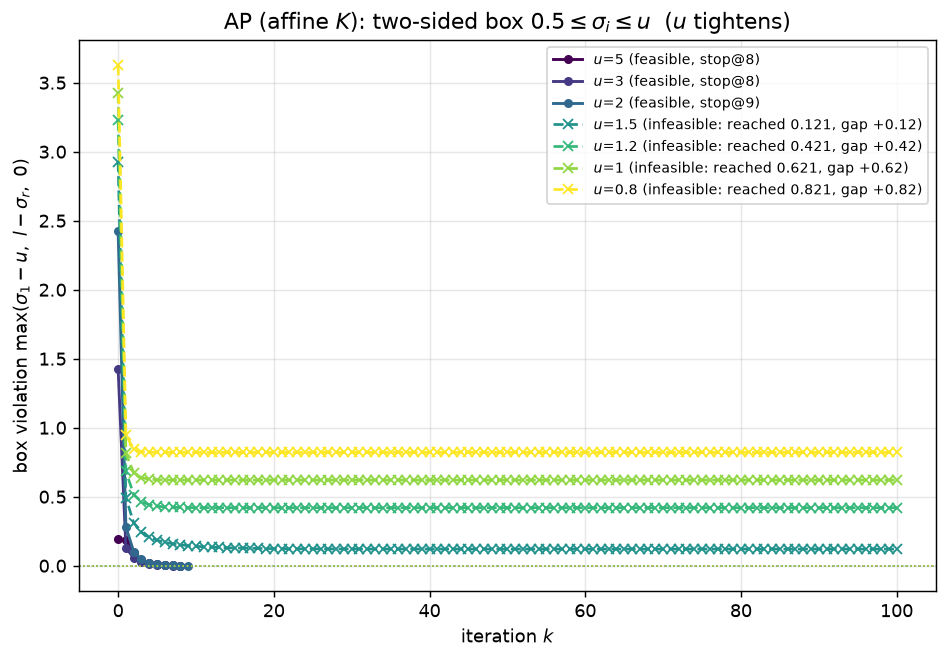}
  \caption{two-sided box}
\end{subfigure}\hfill
\begin{subfigure}[t]{0.32\textwidth}
  \includegraphics[width=\linewidth]{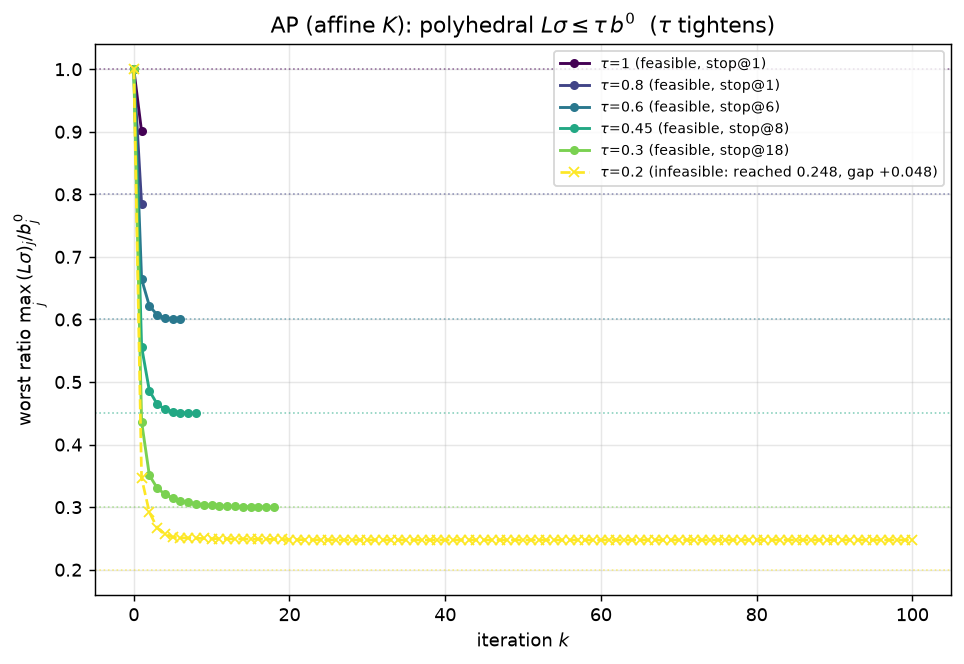}
  \caption{arbitrary polyhedron}
\end{subfigure}

\vspace{0.5em}

\begin{subfigure}[t]{0.32\textwidth}
  \includegraphics[width=\linewidth]{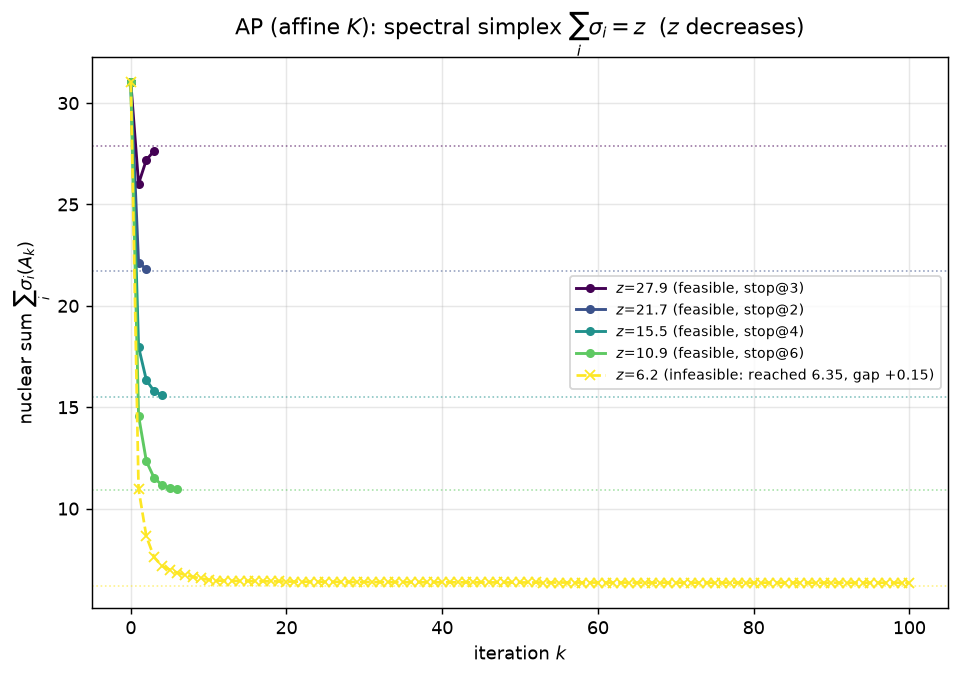}
  \caption{spectral simplex}
\end{subfigure}

\caption{AP convergence for all seven spectral constraints with the
\textbf{affine + box} $\Kset$. Solid lines: feasible runs (legend gives
the early-stop iteration). Dashed lines: infeasible runs (legend gives
the plateau value reached and its gap to target). Dotted horizontals
mark per-curve targets. Infeasibility plateaus appear in the
weighted-cap, two-sided box, polyhedral, and simplex sweeps.}
\label{fig:affine}
\end{figure*}

\begin{figure*}[!t]
\centering
\begin{subfigure}[t]{0.32\textwidth}
  \includegraphics[width=\linewidth]{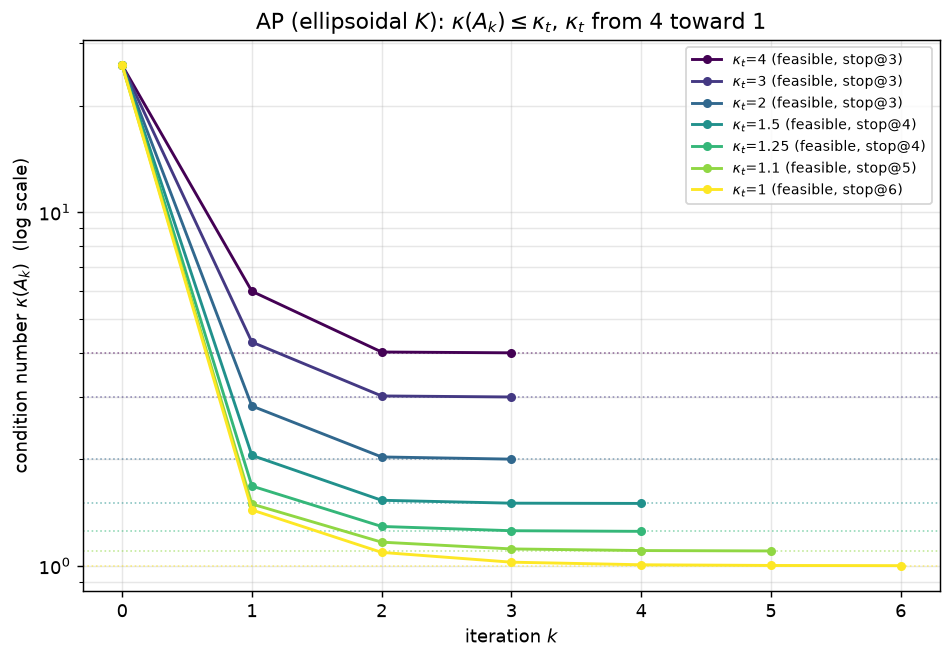}
  \caption{condition number $\kappa$}
\end{subfigure}\hfill
\begin{subfigure}[t]{0.32\textwidth}
  \includegraphics[width=\linewidth]{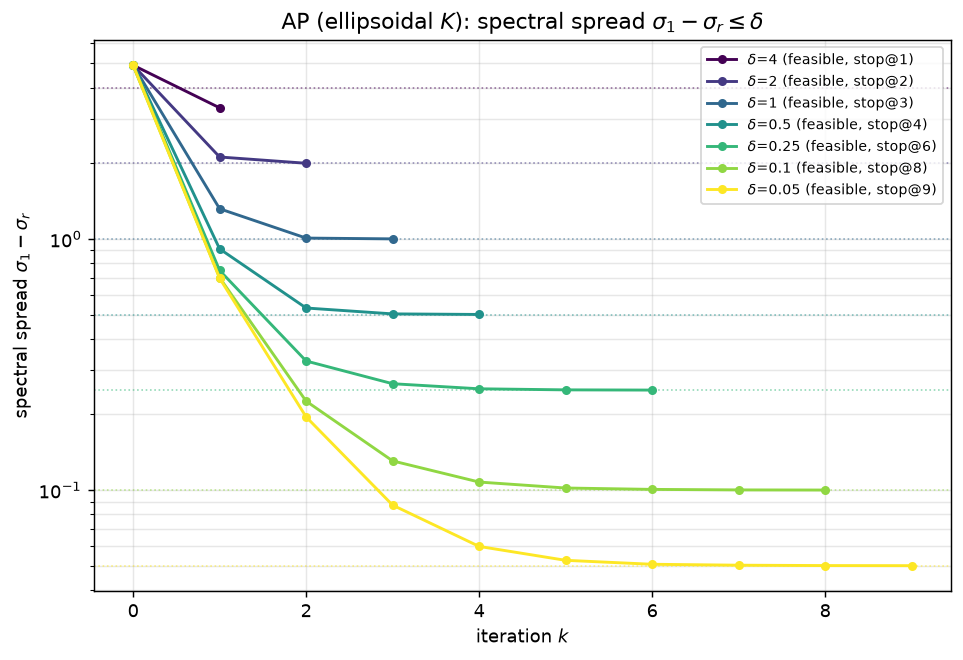}
  \caption{spectral spread}
\end{subfigure}\hfill
\begin{subfigure}[t]{0.32\textwidth}
  \includegraphics[width=\linewidth]{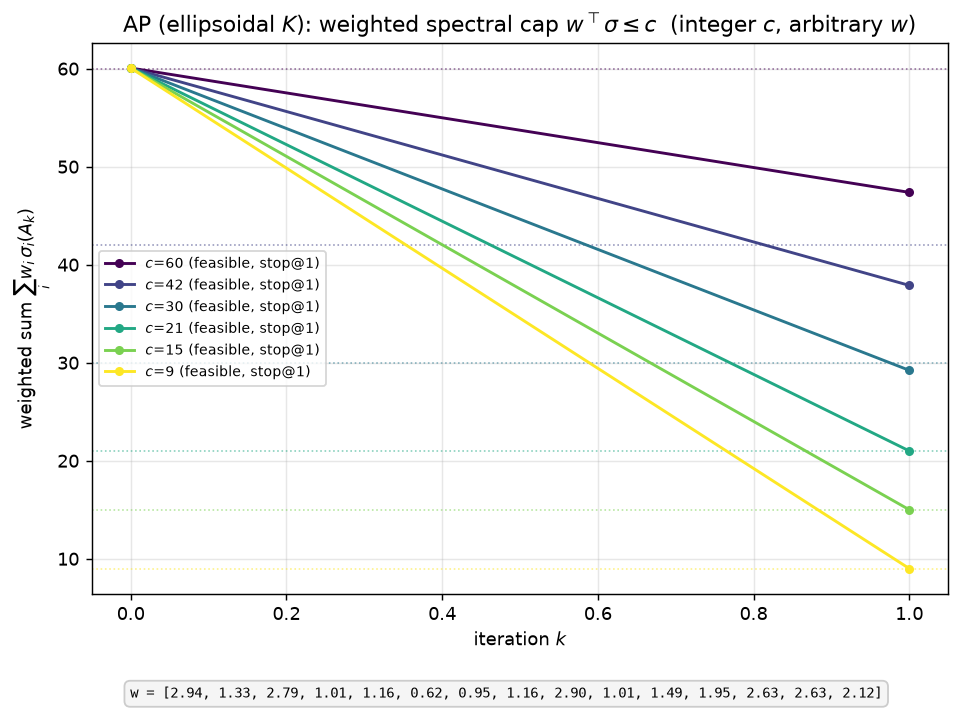}
  \caption{weighted cap}
\end{subfigure}

\vspace{0.5em}

\begin{subfigure}[t]{0.32\textwidth}
  \includegraphics[width=\linewidth]{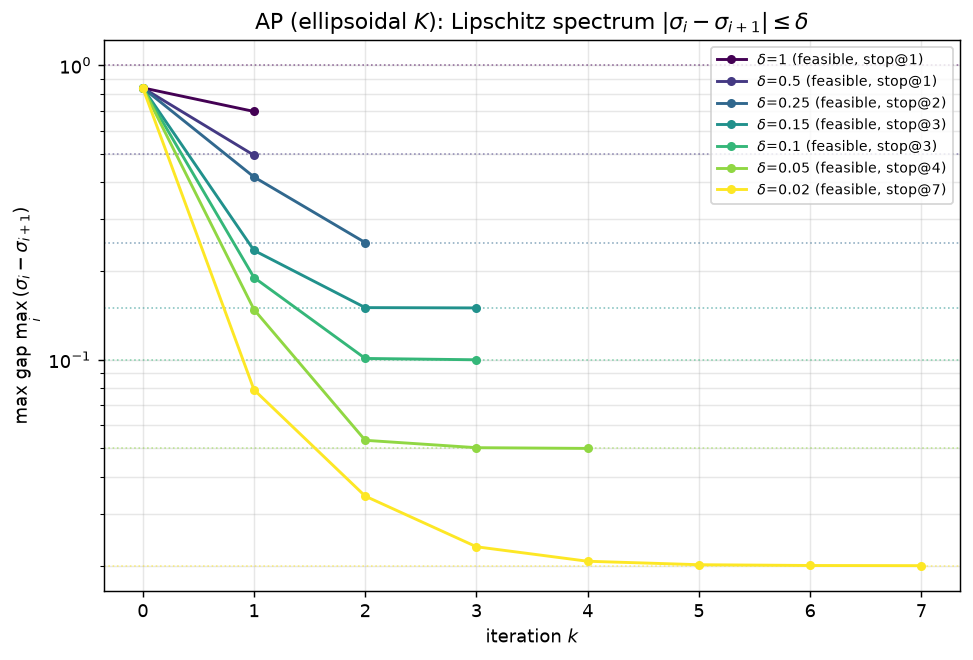}
  \caption{Lipschitz spectrum}
\end{subfigure}\hfill
\begin{subfigure}[t]{0.32\textwidth}
  \includegraphics[width=\linewidth]{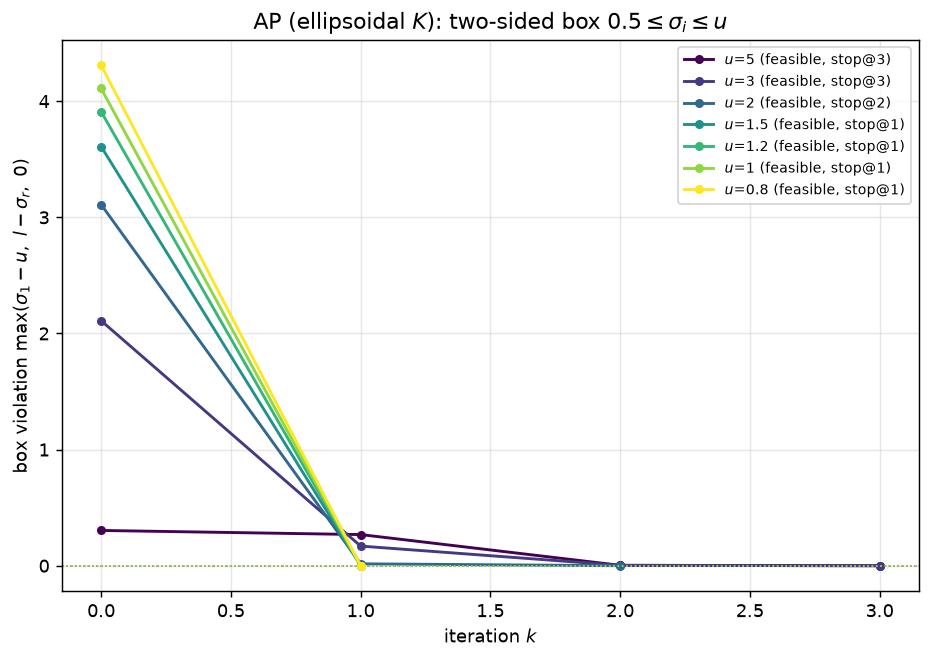}
  \caption{two-sided box}
\end{subfigure}\hfill
\begin{subfigure}[t]{0.32\textwidth}
  \includegraphics[width=\linewidth]{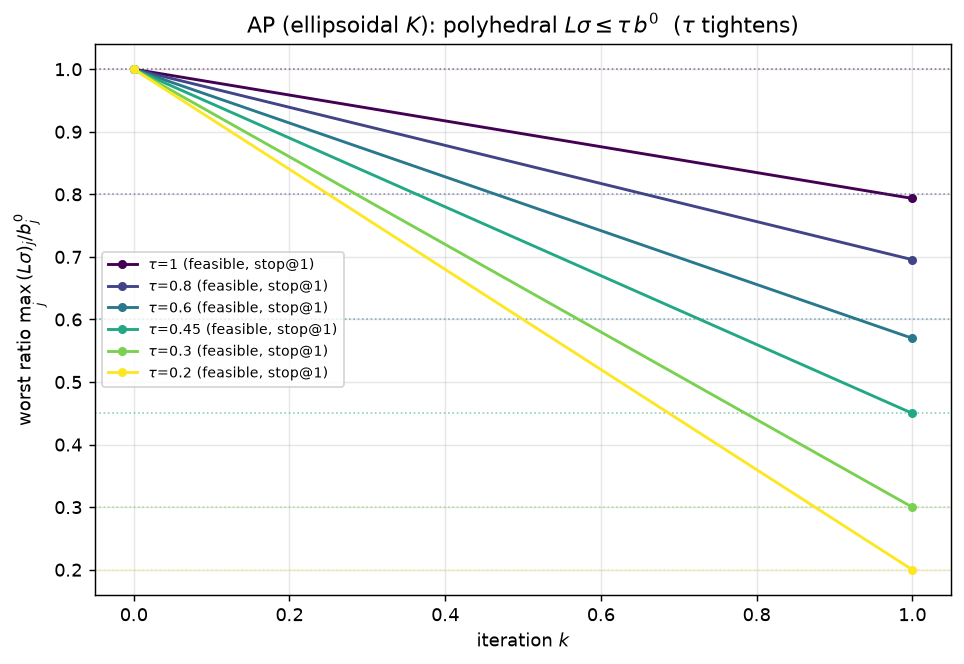}
  \caption{arbitrary polyhedron}
\end{subfigure}

\vspace{0.5em}

\begin{subfigure}[t]{0.32\textwidth}
  \includegraphics[width=\linewidth]{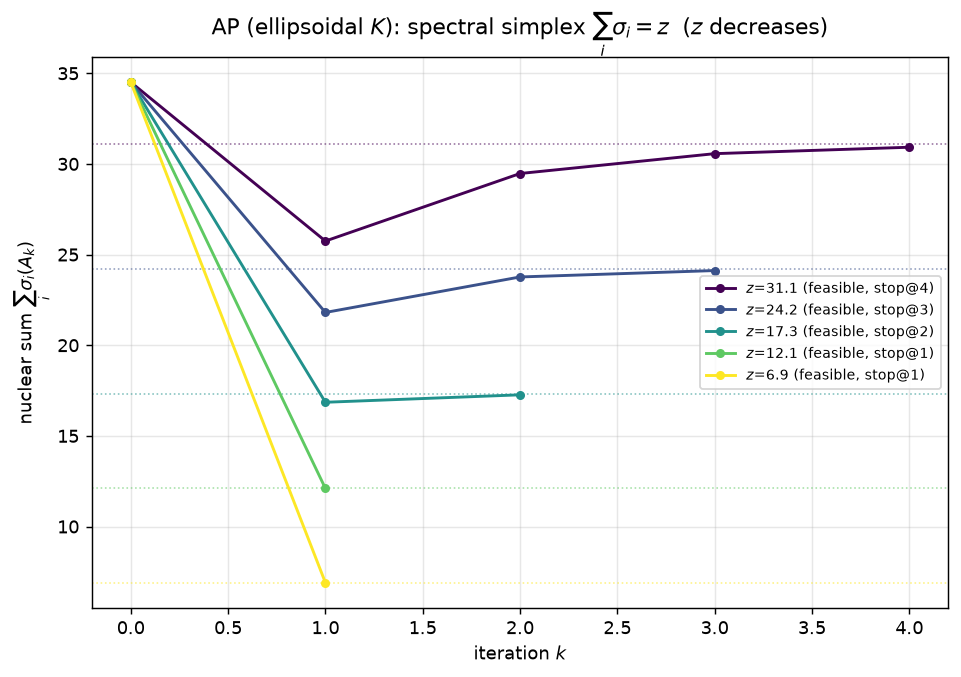}
  \caption{spectral simplex}
\end{subfigure}

\caption{AP convergence for all seven spectral constraints with the
\textbf{ellipsoidal} (non-centered, anisotropic Frobenius) $\Ellipsoid$.
Same plotting convention as Fig.~\ref{fig:affine}. Here every target is
reached, typically within a handful of iterations, and no infeasibility
plateaus arise.}
\label{fig:ellip}
\end{figure*}

\subsection{Summary table}
Table~\ref{tab:summary} consolidates the per-sweep iteration counts and
infeasibility behaviour. Iteration ranges span the loosest to tightest
\emph{feasible} target; an \emph{infeas} entry reports the tightest
parameter that plateaued and the value it reached.

\begin{table}[t]
\caption{AP behaviour for each spectral constraint on the two convex
sets. Range = iterations from loosest to tightest feasible target;
\emph{infeas} = plateaued within 100 iterations (with reached value).}
\label{tab:summary}
\centering
\footnotesize
\setlength{\tabcolsep}{4pt}
\begin{tabular}{lcc}
\toprule
Sweep & Affine+box $\Kset$ & Ellipsoid $\Ellipsoid$ \\
\midrule
$\kappa$: $4 \to 1$             & $11$--$28$                      & $3$--$6$ \\
spread: $\delta\ 4 \to 0.05$    & $1$--$38$                       & $1$--$9$ \\
weighted ($c$ int.)             & $1$--$27$; inf@$12.3$           & $1$ \\
Lipschitz: $\delta\ 1 \to 0.02$ & $1$--$32$                       & $1$--$7$ \\
box: $[0.5,u]$, $u\ 5{\to}0.8$  & $8$--$9$; inf@$\sgm_1{=}1.62$   & $1$--$3$ \\
polyhedral: $\tau\ 1 \to 0.2$   & $1$--$18$; inf@$0.25$           & $1$ \\
simplex: $z\ \downarrow$        & $2$--$6$; inf@$6.3$             & $1$--$4$ \\
\bottomrule
\end{tabular}
\\[2pt]
{\footnotesize inf@$v$ = tightest target infeasible; plateaued at $v$.}
\end{table}

\section{Discussion}\label{sec:disc}

The numerical study supports three observations.

\begin{enumerate}
\item[(O1)] \textbf{The spectral projector is cheap.}
Each call to $\proj_\Cset$ is one thin SVD plus a single QP on $r=15$
variables. For all sweeps, AP completes in $\le 100$ iterations,
typically much fewer.

\item[(O2)] \textbf{Geometry of $\Kset$ matters more than the spectral
case.}
The same seven sweeps look qualitatively different on polyhedral
$\Kset$ vs ellipsoidal $\Kset$. Polyhedral $\Kset$ slows convergence
and admits infeasibility plateaus in four of the seven cases;
ellipsoidal $\Kset$ is smooth and hits feasibility almost immediately
across all seven.

\item[(O3)] \textbf{Infeasibility is informative.}
When AP plateaus above the target, the plateau value
$\lim_k \phi(A_k)$ is still a useful number. Since $A_k \in \Kset$
throughout, it is an \emph{achievable} value of the spectral functional
within the structural constraints, hence an upper bound on the best
attainable value $\min_{X\in\Kset}\phi(X)$. Note that AP minimises the
gap $\|A_k - X_k\|$ rather than $\phi$ over $\Kset$, so we report the
plateau as a heuristic estimate of that minimum, not a proven optimum.
\end{enumerate}

\section{Conclusion}\label{sec:conc}

We presented a uniform framework for matrix feasibility with polyhedral
spectral constraints, exploiting the classical spectral transfer
identity~\eqref{eq:projC} to reduce the matrix projection to a small QP
on $r$ singular values. Embedded in a plain alternating projection
loop, the framework solves a range of spectral feasibility problems
exactly (up to early-stop tolerance) on two qualitatively different
families of convex matrix sets. Future work includes
(a) a rigorous local-convergence analysis for AP on polyhedral $\Kset$
intersected with a smooth $\Cset$, and (b) extension to arbitrary
admissible (not necessarily polyhedral) $\spec$.

\appendices

\section{The Four Applications as Instances of~\eqref{eq:feas}}
\label{app:apps}

We spell out each application of Section~\ref{sec:intro}, give its
underlying optimisation problem, and identify the structural set $\Kset$
and the spectral set $\spec$ so that it matches the template
\[
\text{find } X \in \Kset \cap \Cset,
\qquad
\Cset = \{X : \sgm(X) \in \spec\},
\]
optionally with the proximity objective
$\min_X \tfrac12\|X-A\|_F^2$ that selects the nearest admissible matrix
to a datum $A$. Throughout, $\spec$ is taken on the ordered nonnegative
cone $\R^r_{\ge0,\downarrow}$.

\subsection{Preconditioning to a target condition number}
\emph{Underlying problem.} Given $A$ and (optionally) a prescribed
sparsity / affine pattern, find the nearest matrix whose condition
number is at most $\kappa_{\max}$:
\[
\min_{X}\ \tfrac12\|X-A\|_F^2
\quad\text{s.t.}\quad
X \in \Kset_{\mathrm{struct}},\ \
\sgm_1(X) \le \kappa_{\max}\,\sgm_r(X).
\]

\emph{Identification.} Take $\Kset = \Kset_{\mathrm{struct}}$ (an affine
or box set, closed convex) and
$\spec = \{ s : s_1 \le \kappa_{\max}\, s_r \}$, a single linear
inequality, hence convex. Then $\Cset$ is exactly the condition-number
set and the task is~\eqref{eq:feas} with a proximity objective. The
constraint lower-bounds $\sgm_r$, so $\Cset$ is non-convex.

\subsection{Weighted nuclear-norm low-rank recovery}
\emph{Underlying problem.} With a linear measurement operator
$\mathcal{A}(X)=b$ and an entrywise bound $\beta$, seek a matrix
consistent with the data whose weighted nuclear norm is capped:
\[
\text{find } X
\quad\text{s.t.}\quad
\mathcal{A}(X)=b,\ \ |X_{ij}|\le\beta,\ \
\sum_{i} w_i\,\sgm_i(X) \le \tau,
\]
with weights $w \in \R^r_{>0}$.

\emph{Identification.}
$\Kset = \{X : \mathcal{A}(X)=b,\ |X_{ij}|\le\beta\}$ (affine $\cap$ box,
closed convex), and $\spec=\{s : w^\top s \le \tau\}$ (a half-space on
the ordered cone, convex for every $w$). The projection $\proj_\spec$ is
the small QP of Section~\ref{sec:polyhedral}. The matrix set $\Cset$ is
convex iff the weights are non-increasing; the WNNM
weighting of~\cite{gu2017wnnm} (small weights on large singular values)
is the non-convex regime, handled here as a heuristic.

\subsection{Gain shaping for system matrices}
\emph{Underlying problem.} For a discrete-time linear map
$x_{k+1}=Xx_k$ with a prescribed sparsity pattern, find the nearest
admissible $X$ whose gains lie in a band:
\[
\min_X \tfrac12\|X-A\|_F^2
\quad\text{s.t.}\quad
X \in \Kset_{\mathrm{patt}},\ \
\ell \le \sgm_i(X) \le u\ \ \forall i.
\]

\emph{Identification.} $\Kset=\Kset_{\mathrm{patt}}$ (the pattern is an
affine constraint, closed convex) and $\spec=\{s : \ell \le s_i \le u\}$
(a box on the ordered cone, convex). Choosing $u<1$ certifies a
contraction since $\rho(X)\le\sgm_1(X)$. The lower bound $\ell$ makes
$\Cset$ non-convex.

\subsection{Spectrum shaping and whitening}
\emph{Underlying problem.} Find a matrix near $A$ with a flattened
spectrum, either by bounding the spread or the successive gaps:
\[
\min_X \tfrac12\|X-A\|_F^2
\quad\text{s.t.}\quad
X\in\Kset,\ \
\sgm_1(X)-\sgm_r(X)\le\delta,
\]
or with the successive-gap constraint
$\sgm_i(X)-\sgm_{i+1}(X)\le\delta$ in place of the spread.

\emph{Identification.} $\Kset$ is the proximity / structural set (closed
convex) and $\spec=\{s : s_1-s_r\le\delta\}$ or
$\{s : s_i-s_{i+1}\le\delta\}$ (linear inequalities on the ordered cone,
convex). The limit $\delta\to0$ forces all singular values equal, i.e.\
a scaled orthogonal (whitening) matrix. Both constraints bound small
singular values from below relative to the large ones, so $\Cset$ is
non-convex.

\section{Douglas--Rachford and ADMM for~\eqref{eq:feas}}
\label{app:dradmm}

Both methods reuse the \emph{same} two projection oracles as AP --- the
structural projector $\proj_\Kset$ and the spectral projector
$\proj_\Cset(A)=U\diag(\proj_\spec(\sgm(A)))V^\top$ --- and differ only
in how they combine them. We state each specialised to the two-set
feasibility problem $\Kset\cap\Cset$.

\subsection{Douglas--Rachford (DR)}
Let the reflections be $R_\Kset = 2\proj_\Kset - I$ and
$R_\Cset = 2\proj_\Cset - I$. DR iterates the averaged reflection
operator $T=\tfrac12\bigl(I+R_\Kset R_\Cset\bigr)$ on an auxiliary
variable $Z$. Unrolling $T$ gives the projector-only form
\begin{align}
Y_k     &= \proj_\Cset(Z_k), \notag\\
X_k     &= \proj_\Kset(2Y_k - Z_k), \label{eq:DR}\\
Z_{k+1} &= Z_k + (X_k - Y_k). \notag
\end{align}
The auxiliary $Z_k$ need not lie in either set; the \emph{solution
estimate} is the shadow $Y_k=\proj_\Cset(Z_k)$, which converges to a
point of $\Kset\cap\Cset$ when the intersection is nonempty (for convex
$\Kset,\Cset$). Relative to AP~\eqref{eq:AP}, DR replaces the plain
composition $\proj_\Kset\circ\proj_\Cset$ by the reflect--reflect--average
step, which lets it escape the corner stalling that AP can exhibit on
polyhedral $\Kset$.

\subsection{ADMM}
Recast~\eqref{eq:feas} as the consensus problem
\[
\min_{X,Y}\ \iota_\Cset(X) + \iota_\Kset(Y)
\quad \text{s.t.}\quad X=Y,
\]
where $\iota$ is the $\{0,\infty\}$ indicator. With scaled dual variable
$U$ and penalty $\rho>0$, the proximal map of an indicator is the
Euclidean projection, so the ADMM updates reduce to
\begin{align}
X_{k+1} &= \proj_\Cset(Y_k - U_k), \notag\\
Y_{k+1} &= \proj_\Kset(X_{k+1} + U_k), \label{eq:ADMM}\\
U_{k+1} &= U_k + X_{k+1} - Y_{k+1}. \notag
\end{align}
The penalty $\rho$ cancels --- the proximal maps of indicators are
projections regardless of $\rho$ --- so feasibility ADMM is
parameter-free here. For two sets this iteration is precisely DR applied
to the dual problem and produces the same shadow sequence; both reduce to
AP only when the dual offset $U$ (equivalently $Z_k-Y_k$ in DR) vanishes.

In every case the only spectral computation is the single SVD inside
$\proj_\Cset$. We report plain AP in the body and include
\eqref{eq:DR}--\eqref{eq:ADMM} for completeness.

\bibliographystyle{IEEEtran}
\bibliography{references}

\end{document}